\documentclass[12pt, dvipdfmx]{article}
\usepackage[mathscr]{eucal}
\usepackage{amssymb}
\usepackage{latexsym}
\usepackage{amsthm}
\usepackage{amsmath}
\usepackage[dvips]{graphicx}
\usepackage{psfrag}
\usepackage{a4wide}
\usepackage{tikz-cd}

\newtheorem{defn}{Definition}[section]
\newtheorem{thm}{Theorem}[section]

\newtheorem{lem}{Lemma}[section]
\newtheorem{rem}{Remark}[section]
\newtheorem{ex}{Example}[section]
\newtheorem{cor}{Corollary}[section]
\newtheorem{prop}{Proposition}[section]

\numberwithin{equation}{section}
\setcounter{section}{0}

\newcommand{\C}{{\mathbb C}}

\newcommand{\fT}{{\mathfrak T}}
\newcommand{\fM}{{\mathfrak M}}
\newcommand{\cM}{{\mathcal M}}
\newcommand{\cL}{{\mathcal L}}

\newcommand{\R}{{\mathbb R}}

\newcommand{\bT}{{\mathbb T}}
\newcommand{\cH}{{\mathcal H}}
\newcommand{\cK}{{\mathcal K}}
\newcommand{\cG}{{\mathcal G}}
\newcommand{\ran}{\operatorname{ran}}

\newcommand{\la}{\langle}
\newcommand{\ra}{\rangle}

\newcommand{\lam}{\lambda}

\begin{document}

\title{An indefinite range inclusion theorem\\ for triplets of bounded linear operators\\ on a Hilbert space
\thanks{The research was supported by JSPS KAKENHI Grant Number 15K04926.}
}
\author{
{\sc Michio SETO}\\
[1ex]
{\small National Defense Academy,  
Yokosuka 239-8686, Japan} \\
{\small 
{\it E-mail address}: {\tt mseto@nda.ac.jp}}\\
and\\
{\sc Atsushi UCHIYAMA}\\
[1ex]
{\small Yamagata University, Yamagata 990-8560, Japan} \\
{\small 
{\it E-mail address}: {\tt uchiyama@sci.kj.yamagata-u.ac.jp}}
}

\date{}

\maketitle
\begin{abstract}
We study triplets of Hilbert space operators satisfying a certain inequality.   
A range inclusion theorem with norm estimate for those triplets is given 
with the language of Kre\u{\i}n space geometry and 
de Branges-Rovnyak space theory. 
\end{abstract}

\begin{center}
2010 Mathematical Subject Classification: Primary 47B50; Secondary 47B32\\
keywords: Kre\u{\i}n space, de Branges-Rovnyak space, Toeplitz operator
\end{center}

\section{Introduction}
Let $T_1$, $T_2$ and $T_3$ be bounded linear operators on a Hilbert space $\cH$. 
In this paper, we are going to study triplet $(T_1,T_2,T_3)$ satisfying the following inequality: 
\begin{equation}\label{eq:1-1}
0\leq T_1T_1^{\ast}+T_2T_2^{\ast}-T_3T_3^{\ast}\leq I.
\end{equation}

Let $\fT(\cH)$ denote the set of operator triplets satisfying (\ref{eq:1-1}) on $\cH$. 
For any triplet $(T_1,T_2,T_3)$ in $\fT(\cH)$, we set 
\[
T=(T_1T_1^{\ast}+T_2T_2^{\ast}-T_3T_3^{\ast})^{1/2}.
\]

As the main theorem of this paper, we will show the following: 
for any vector $u$ in $\ran T$, there exists some vector 
$\mathbf{z}_{\varepsilon}=(z_1(\varepsilon),z_2(\varepsilon),z_3(\varepsilon))^t$ 
in $\cH\oplus \cH \oplus \cH$ such that 
\begin{enumerate}
\item $T_1z_1(\varepsilon)+T_2z_2(\varepsilon)-T_3z_3(\varepsilon)\to u$ 
$(\varepsilon\to 0)$ in the strong topology of $\cH$,
\item
$0\leq \|z_1(\varepsilon)\|_{\cH}^2+\|z_2(\varepsilon)\|_{\cH}^2-\|z_3(\varepsilon)\|_{\cH}^2
\uparrow  \|u\|_{\cM(T)}^2$ $(\varepsilon\downarrow 0)$,
\end{enumerate}
where $\|u\|_{\cM(T)}$ denotes the norm of $u$ in the de Branges-Rovnyak space induced by $T$. 

This paper is organized as follows. 
In Section 2, 
by giving examples from operator theory on Hardy spaces, 
it is shown that $\fT(\cH)$ is nontrivial. 
In Section 3, 
we study indefinite inner product spaces induced by triplets in $\fT(\cH)$, and 
prove the main theorem (Theorem \ref{thm:4-1}). 
In Section 4, we investigate the local structure of range spaces of operators appearing in Section 3. 

\section{Examples}
Trivial examples of triplets in $\fT(\cH)$ are easily obtained from Douglas' range inclusion theorem.  
We shall see that $\fT(\cH)$ is nontrivial. 

\begin{ex}\label{ex:1-1}\rm 
Let $H^2$ be the Hardy space over the unit disk, 
and let $H^{\infty}$ be the Banach algebra consisting of all bounded analytic functions in $H^2$. 
For any function $\varphi$ in  $H^{\infty}$, 
$T_{\varphi}$ denotes the Toeplitz operator with symbol $\varphi$. 
We choose $\varphi_1$ and $\varphi_2$ from $H^{\infty}$ satisfying 
\[
\| \begin{pmatrix}
T_{\varphi_1} & T_{\varphi_2}
\end{pmatrix}
 \| \leq 1.
\]
Then this norm inequality is equivalent to that 
\[
0\leq T_{\varphi_1}T_{\varphi_1}^{\ast}+T_{\varphi_2}T_{\varphi_2}^{\ast}\leq I. 
\]
Further, 
we choose $\psi_1$ and $\psi_2$ from $H^{\infty}$ satisfying 
\[
\| \begin{pmatrix}
T_{\psi_1} \\ 
T_{\psi_2}
\end{pmatrix}
 \| \leq 1.
\]
Then, setting  
\[
\varphi_3=\varphi_1\psi_1+\varphi_2\psi_2=
\begin{pmatrix}
\varphi_1 & \varphi_2
\end{pmatrix}
\begin{pmatrix}
\psi_1 \\ 
\psi_2
\end{pmatrix},
\] 
by the generalized Toeplitz-corona theorem (see Theorem 8.57 in Agler-McCarthy~\cite{AM}), 
we have that
\[
0
\leq T_{\varphi_1}T_{\varphi_1}^{\ast}+T_{\varphi_2}T_{\varphi_2}^{\ast}-T_{\varphi_3}T_{\varphi_3}^{\ast}
\leq T_{\varphi_1}T_{\varphi_1}^{\ast}+T_{\varphi_2}T_{\varphi_2}^{\ast}
\leq I.   
\]
\end{ex}

Our study has been motivated by the next example.  
\begin{ex}[Wu-Seto-Yang~\cite{WSY}]\label{ex:2-2}\rm 
Let $H^2$ be the Hardy space over the unit disk. 
Then the tensor product Hilbert space $H^2 \otimes H^2$ 
is isomorphic to the Hardy space over the bidisk. 
Let $z$ and $w$ denote coordinate functions, 
and let $T_z$ and $T_w$ be Toeplitz operators with symbols $z$ and $w$, respectively.   
We note that $T_z$ and $T_w$ are doubly commuting isometries on $H^2\otimes H^2$. 
In fact, 
$T_z$ and $T_w$ are identified with $T_z\otimes I$ and $I\otimes T_w$, respectively. 
Now,  since orthogonal projections $T_zT_z^{\ast}$ and $T_w(I-T_zT_z^{\ast})T_w^{\ast}$ 
are commuting, 
\[
T_zT_z^{\ast}+T_wT_w^{\ast}-T_{zw}T_{zw}^{\ast}
=T_zT_z^{\ast}+T_w(I-T_zT_z^{\ast})T_w^{\ast},
\]
is the orthogonal projection onto $(H^2\otimes H^2)\ominus \C$. 
Hence $(T_z,T_w,T_{zw})$ belongs to $\fT(H^2\otimes H^2)$. 
Further non-trivial examples can be obtained from the module structure of $H^2\otimes H^2$.  
Let $\cM$ be a closed subspace of $H^2\otimes H^2$. 
Then $\cM$ is called a submodule if $\cM$ is invariant for $T_z$ and $T_w$. 
For many examples of submodules in $H^2\otimes H^2$, 
there exist bounded analytic functions $\varphi_1$, $\varphi_2$ and $\varphi_3$ on the bidisk 
such that
\[
T_{\varphi_1}T_{\varphi_1}^{\ast}+T_{\varphi_2}T_{\varphi_2}^{\ast}-T_{\varphi_3}T_{\varphi_3}^{\ast}
=P_{\cM},
\]
where $P_{\cM}$ denotes the orthogonal projection onto $\cM$, 
and 
\[
T_{\varphi_1}^{\ast}T_{\varphi_1}+T_{\varphi_2}^{\ast}T_{\varphi_2}-T_{\varphi_3}^{\ast}T_{\varphi_3}
=I.
\] 
\end{ex}

\begin{rem}\rm 
We note that Example \ref{ex:1-1} is deduced from the complete Pick property. 
On the other hand, the kernel of $H^2\otimes H^2$ does not have it. 
\end{rem}

\section{Indefinite range inclusion}
Setting
\[\cH_+=\cH\oplus \cH,\quad \cH_-=\cH \quad \mbox{and}\quad   
J=
\begin{pmatrix}
1 & 0 & 0\\
0 & 1 & 0\\
0 & 0& -1
\end{pmatrix},
\]
we consider the Kre\u{\i}n space $\cK=(\cH_+\oplus \cH_-,J)$, that is, 
for any vectors $\mathbf{x}=(x_1,x_2,x_3)^t$ and $\mathbf{y}=(y_1,y_2,y_3)^t$ in $\cH\oplus \cH \oplus \cH$, 
the inner product of $\mathbf{x}$ and $\mathbf{y}$ in $\cK$ is defined to be 
\[
\la \mathbf{x},\mathbf{y} \ra_{\cK}=\la J\mathbf{x},\mathbf{y} \ra_{\cH\oplus\cH\oplus\cH}= 
\la x_1,y_1\ra_{\cH}+\la x_2,y_2\ra_{\cH}-\la x_3, y_3\ra_{\cH}.
\]
For basic Kre\u{\i}n space geometry, see Dritschel-Rovnyak~\cite{DR}. 

Let $(T_1,T_2,T_3)$ be a triplet in $\fT(\cH)$.  
Then we define a linear operator $\bT$ as follows:
\[\bT: \cK \to \cH,\quad 
\begin{pmatrix}
x_1\\
x_2\\
x_3
\end{pmatrix}
\mapsto T_1x_1+T_2x_2-T_3x_3.
\]
The adjoint operator $\bT^{\sharp}$ of $\bT$ with respect to inner products of $\cH$ and $\cK$ is obtained as follows: 
\begin{align*}
\la x,\bT (x_1,x_2,x_3)^t \ra_{\cH}
&=\la x,T_1x_1+T_2x_2-T_3x_3 \ra_{\cH}\\
&=\la T_1^{\ast}x,x_1 \ra_{\cH}+\la T_2^{\ast}x,x_2 \ra_{\cH}
-\la T_3^{\ast}x,x_3 \ra_{\cH}\\
&=\la (T_1^{\ast}x,T_2^{\ast}x,T_3^{\ast}x)^t,(x_1,x_2,x_3)^t \ra_{\cK},
\end{align*}
that is, we have that
\[\bT^{\sharp}:\cH\to \cK, \quad x\mapsto
\begin{pmatrix}
T_1^{\ast}x\\
T_2^{\ast}x\\
T_3^{\ast}x
\end{pmatrix}.
\]
In particular, we have that 
\[ 
\bT\bT^{\sharp} x
=T_1T_1^{\ast}x+T_2T_2^{\ast}x-T_3T_3^{\ast}x.
\]
For any $(T_1,T_2,T_3)$ in $\fT(\cH)$,   
we set 
\[T=(T_1T_1^{\ast}+T_2T_2^{\ast}-T_3T_3^{\ast})^{1/2}.
\] 
Note that $T$ is positive and contractive. 
Consider the operator $V: \ran T\to \bT^{\sharp}(\ker T)^{\perp}$ defined by 
\[VTx=
\begin{pmatrix}
T_1^{\ast}x\\
T_2^{\ast}x\\
T_3^{\ast}x
\end{pmatrix}
\quad (x\in (\ker T)^{\perp}). 
\]
Then 
it follows from the identity 
\begin{equation}\label{eq:3-0}
\| Tx \|_{\cH}^2=
\|T_1^{\ast} x\|_{\cH}^2+\|T_2^{\ast} x\|_{\cH}^2-\|T_3^{\ast} x\|_{\cH}^2=\la \bT^{\sharp}x, \bT^{\sharp}x\ra_{\cK}
\end{equation}
that $V$ is an isometry and 
$\bT^{\sharp}(\ker T)^{\perp}$ is a pre-Hilbert space. 
Let $\cK_0$ be the completion of $\bT^{\sharp}(\ker T)^{\perp}$ 
with the norm induced by (\ref{eq:3-0}), 
and let $\widetilde{V}: \overline{\ran}T\to \cK_0$ denote the isometric extension of $V$, 
in fact, $\widetilde{V}$ is unitary.  
Then $\bT^{\sharp}=\widetilde{V}T$ on $(\ker T)^{\perp}$ 
gives the polar decomposition of $\bT^{\sharp}$, that is,
\begin{equation}\label{eq:3-0-1}
\begin{tikzcd}
\cL \arrow[r,"\bT^{\sharp}"]\arrow[d,"T",swap] & \cK_0\\
\cL \arrow[ru,"\widetilde{V}",swap] & 
\end{tikzcd}
\quad (\cL:=(\ker T)^{\perp}=\overline{\ran}T)
\end{equation}
is commutative. 
 
Further, it follows from (\ref{eq:3-0}) that $\bT^{\sharp}$ is bounded in (\ref{eq:3-0-1}). 
Hence, we can take the Hilbert space adjoint $\widetilde{\bT}$ 
of $\bT^{\sharp}$ in (\ref{eq:3-0-1}). 
We summarize basic properties of $\widetilde{\bT}$ in the following proposition: 
\begin{prop}
Let $\widetilde{\bT}:\cK_0\to \cL$ 
be the Hilbert space adjoint of $\bT^{\sharp}$ in the sense of {\rm(\ref{eq:3-0-1})}. 
Then
\begin{enumerate}
\item[\rm (i)] $\widetilde{\bT}$ is injective, 
\item[\rm (ii)] $\widetilde{\bT}$ is the extension of $\bT|_{\bT^{\sharp}\cL}$,
\item[\rm (iii)] $\ran \widetilde{\bT}$ is dense in $\cL$.
\end{enumerate}
\end{prop}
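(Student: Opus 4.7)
The plan is to leverage the polar decomposition $\bT^{\sharp}=\widetilde{V}T$ on $\cL$ recorded in diagram (\ref{eq:3-0-1}). Since $T$ is positive on $\cH$, the subspace $\cL=(\ker T)^{\perp}$ reduces $T$, so $T|_{\cL}:\cL\to\cL$ is self-adjoint and injective with range dense in $\cL$. Because $\widetilde{V}:\cL\to\cK_0$ is unitary, taking the Hilbert space adjoint of $\bT^{\sharp}=\widetilde{V}T$ yields the clean factorization
\[
\widetilde{\bT}=T\widetilde{V}^{-1}\colon\cK_0\to\cL,
\]
and all three assertions reduce to short calculations from this identity.

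For (i), $\widetilde{V}^{-1}$ is a unitary, hence injective, and $T|_{\cL}$ is injective by the definition of $\cL$, so the composition $\widetilde{\bT}=T\widetilde{V}^{-1}$ is injective. For (iii), I would note that $\ran\widetilde{\bT}=T(\widetilde{V}^{-1}\cK_{0})=T\cL=\ran T$, which is dense in $\overline{\ran}\,T=\cL$ by definition; equivalently, $(\ran\widetilde{\bT})^{\perp}=\ker\bT^{\sharp}=\{0\}$ since $\bT^{\sharp}=\widetilde{V}T$ is a composition of injectives on $\cL$. For (ii), I would take an arbitrary $y=\bT^{\sharp}x\in\bT^{\sharp}\cL$ with $x\in\cL$; then, using the factorization together with $\widetilde{V}^{-1}\widetilde{V}=I_{\cL}$,
\[
\widetilde{\bT}y=T\widetilde{V}^{-1}\widetilde{V}Tx=T^{2}x,
\]
while directly from the definitions of $\bT$ and $\bT^{\sharp}$,
\[
\bT y=\bT\bT^{\sharp}x=(T_{1}T_{1}^{\ast}+T_{2}T_{2}^{\ast}-T_{3}T_{3}^{\ast})x=T^{2}x.
\]
Hence $\widetilde{\bT}$ and $\bT$ agree on the subspace $\bT^{\sharp}\cL$, which is exactly what ``extension'' means here.

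The only real obstacle is notational bookkeeping: one has to keep track of whether $T$ is being viewed on $\cH$ or as its self-adjoint restriction to $\cL$, whether $\bT^{\sharp}$ takes values in the Kre\u{\i}n space $\cK$ or in the Hilbert space completion $\cK_{0}$, and which inner product is governing each adjoint relation. Once these identifications are kept straight, none of (i)--(iii) carries any substantive analytic content beyond what is already encoded in the polar decomposition (\ref{eq:3-0-1}).
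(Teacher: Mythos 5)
Your proof is correct, but it runs along a different track from the paper's. You take adjoints in the polar decomposition $\bT^{\sharp}=\widetilde{V}\,(T|_{\cL})$ to get the explicit formula $\widetilde{\bT}=T\widetilde{V}^{-1}$ (legitimate, since $T|_{\cL}$ is self-adjoint and injective on the reducing subspace $\cL$, and $\widetilde{V}$ is unitary because its range contains the dense set $\bT^{\sharp}\cL$), and then all three claims fall out of that single identity. The paper never writes down this factorization: it proves (i) by a density argument in $\cK_0$ (if $\widetilde{\bT}\mathbf{x}=0$, approximate $\mathbf{x}$ by $\bT^{\sharp}w_n$ and use the adjoint relation to get $\|\mathbf{x}\|_{\cK_0}^2=\lim\la\widetilde{\bT}\mathbf{x},w_n\ra_{\cH}=0$), proves (ii) by the inner-product computation $\la\bT\bT^{\sharp}x,y\ra_{\cH}=\la\bT^{\sharp}x,\bT^{\sharp}y\ra_{\cK_0}=\la\widetilde{\bT}\bT^{\sharp}x,y\ra_{\cH}$, and deduces (iii) from (ii) via $\ran\widetilde{\bT}\supset\bT\bT^{\sharp}\cL=\ran T^2$. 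Your route buys more for the same price: the identity $\widetilde{\bT}=T\widetilde{V}^{-1}$ exhibits $\widetilde{\bT}$ as unitarily equivalent to $T|_{\cL}$, which immediately yields the equality $\cM(T)=\cM(\widetilde{\bT})$ that the paper only obtains later by invoking the method of Theorem \ref{thm:3-4}. The paper's argument, on the other hand, uses nothing beyond the defining adjoint relation and the density of $\bT^{\sharp}\cL$ in $\cK_0$, so it is slightly more self-contained and does not require first verifying that $\widetilde{V}$ is unitary. The one place to be careful in your write-up is the bookkeeping you yourself flag: in (iii), $T\cL=\ran(T|_{\cL})$ coincides with $\ran T$ only because $T$ annihilates $\cL^{\perp}=\ker T$, and in the claim $(\ran\widetilde{\bT})^{\perp}=\ker\bT^{\sharp}$ the operator $\bT^{\sharp}$ must be read as the restricted map $\cL\to\cK_0$; both points are easily justified, so there is no gap.
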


\begin{proof}
Suppose that $\widetilde{\bT}\mathbf{x}=0$ for some $\mathbf{x}$ in $\cK_0$. 
Then there exists a sequence $\{w_n\}_n$ in $\cL=(\ker T)^{\perp}$ such that 
$\|\mathbf{x}-\bT^{\sharp} w_n\|_{\cK_0}\to 0$ as $n\to \infty$. 
Hence we have that 
\[
\|\mathbf{x}\|_{\cK_0}^2
=\la \mathbf{x},\mathbf{x} \ra_{\cK_0}
=\lim_{n\to \infty}\la \mathbf{x},\bT^{\sharp}w_n \ra_{\cK_0}
=\lim_{n\to \infty}\la \widetilde{\bT}\mathbf{x},w_n \ra_{\cH}=0.
\]
Thus we have (i). 
Further, since 
\[
\la \bT\bT^{\sharp}x,y \ra_{\cH}
=\la \bT^{\sharp}x,\bT^{\sharp}y \ra_{\cK}
=\la \bT^{\sharp}x,\bT^{\sharp}y \ra_{\cK_0}
=\la \widetilde{\bT}\bT^{\sharp}x,y \ra_{\cH}\quad (x,y \in \cL),
\]
we have (ii). It follows from (ii) that 
\[
\cL \supset 
\widetilde{\bT}\cK_0\supset 
\bT \bT^{\sharp}\cL=\ran T^2.
\]
This concludes (iii). 
\end{proof}

Let $\cM(T)$ denote the de Branges-Rovnyak space induced by $T$, that is, 
$\cM(T)$ is the Hilbert space consisting of all vectors in $\ran T$ 
with the pull-back norm 
\[
\|Tx\|_{\cM(T)}
=\|P_{(\ker T)^{\perp}}x\|_{\cH}
=\min\{ \|y\|_{\cH}:Ty=Tx \},
\] 
where $P_{(\ker T)^{\perp}}$ is the orthogonal projection from $\cH$ onto $(\ker T)^{\perp}$.

\begin{thm}\label{thm:4-1}
Let $\cH$ be a Hilbert space. 
For any triplet $(T_1,T_2,T_3)$ in $\fT(\cH)$, we set 
\[
T=(T_1T_1^{\ast}+T_2T_2^{\ast}-T_3T_3^{\ast})^{1/2}.
\]
If $u$ belongs to $\cM(T)$, then, for any $\varepsilon>0$, there exists some vector  
$\mathbf{z}_{\varepsilon}=(z_1(\varepsilon),z_2(\varepsilon),z_3(\varepsilon))^t$ 
in $\cH\oplus \cH \oplus \cH$ such that 
\begin{enumerate}
\item[\rm (i)] $T_1z_1(\varepsilon)+T_2z_2(\varepsilon)-T_3z_3(\varepsilon)\to u$ 
$(\varepsilon\to 0)$ in the strong topology of $\cH$,
\item[\rm (ii)]
$0\leq \|z_1(\varepsilon)\|_{\cH}^2+\|z_2(\varepsilon)\|_{\cH}^2-\|z_3(\varepsilon)\|_{\cH}^2
\uparrow  \|u\|_{\cM(T)}^2$ $(\varepsilon\downarrow 0)$,
\item[\rm (iii)] 
$\mathbf{z}_{\varepsilon}$ converges to some vector $\mathbf{z}$ in the strong topology of $\cK_0$ 
and $u=\widetilde{\bT}\mathbf{z}$. 
\end{enumerate}
\end{thm}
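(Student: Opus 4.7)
The plan is to construct the approximating vectors $\mathbf{z}_\varepsilon$ by applying a spectral regularisation to a preimage of $u$ under $T$.

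First, I would choose the unique vector $x \in \cL = (\ker T)^\perp$ with $Tx = u$; by the very definition of $\cM(T)$ this $x$ satisfies $\|x\|_\cH = \|u\|_{\cM(T)}$. For each $\varepsilon > 0$, set $w_\varepsilon = (T + \varepsilon I)^{-1} x$ and $\mathbf{z}_\varepsilon = \bT^\sharp w_\varepsilon = (T_1^\ast w_\varepsilon, T_2^\ast w_\varepsilon, T_3^\ast w_\varepsilon)^t$. Note that $w_\varepsilon \in \cL$ because $(T+\varepsilon I)^{-1}$ commutes with $T$ and therefore preserves $\cL$.

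For (i), I would compute $\bT \mathbf{z}_\varepsilon = \bT \bT^\sharp w_\varepsilon = T^2 w_\varepsilon$ and use the identity $T^2(T+\varepsilon I)^{-1} = T - \varepsilon T(T+\varepsilon I)^{-1}$ to obtain $\bT \mathbf{z}_\varepsilon = u - \varepsilon T(T+\varepsilon I)^{-1}x$; since $\|T(T+\varepsilon I)^{-1}\|\le 1$, the error has norm at most $\varepsilon\|x\|_\cH$ and $\bT\mathbf{z}_\varepsilon\to u$ in $\cH$. For (ii), identity (\ref{eq:3-0}) yields $\|z_1(\varepsilon)\|_\cH^2+\|z_2(\varepsilon)\|_\cH^2-\|z_3(\varepsilon)\|_\cH^2 = \|T w_\varepsilon\|_\cH^2$, and by the spectral theorem for the positive operator $T$ this equals $\int (\lambda/(\lambda+\varepsilon))^2\, d\la E_\lambda x,x\ra$, where $\{E_\lambda\}$ denotes the spectral resolution of $T$. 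Because $x \in \cL = \overline{\ran}T$, the measure $\la E_{\cdot}x,x\ra$ places no mass at $0$, so as $\varepsilon\downarrow 0$ the integrand increases monotonically to $1$ almost everywhere, and monotone convergence supplies both the monotone increase and the limiting value $\|x\|_\cH^2 = \|u\|_{\cM(T)}^2$.

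For (iii), the polar decomposition (\ref{eq:3-0-1}) lets me rewrite $\mathbf{z}_\varepsilon = \widetilde{V} T w_\varepsilon = \widetilde{V} T(T+\varepsilon I)^{-1} x$. Since $T(T+\varepsilon I)^{-1}$ converges strongly to the orthogonal projection onto $\overline{\ran}T = \cL$, which fixes $x$, and $\widetilde{V}$ is an isometry, $\mathbf{z}_\varepsilon \to \widetilde{V}x =: \mathbf{z}$ in $\cK_0$. Dualising $\bT^\sharp = \widetilde{V}T$ gives $\widetilde{\bT} = T\widetilde{V}^\ast$, so $\widetilde{\bT}\mathbf{z} = T\widetilde{V}^\ast\widetilde{V}x = Tx = u$. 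The most delicate ingredient is the monotonicity in (ii): it is precisely the condition $x\perp \ker T$ that removes the potential atom of the spectral measure at zero, which is what allows monotone convergence to furnish simultaneously the monotone increase and the sharp limit. Everything else amounts to the resolvent identity for $T(T+\varepsilon I)^{-1}$ together with the continuity of the maps appearing in the polar decomposition.
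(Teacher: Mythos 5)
Your proof is correct and follows essentially the same route as the paper: choose the canonical preimage $x\in(\ker T)^{\perp}$ of $u$, regularize the unbounded inverse of $T$ spectrally, apply $\bT^{\sharp}$, and use the identity $\|\bT^{\sharp}y\|_{\cK_0}^2=\|Ty\|_{\cH}^2$ together with the polar decomposition $\bT^{\sharp}=\widetilde{V}T$ for parts (ii) and (iii). The only difference is cosmetic: you use the resolvent $(T+\varepsilon I)^{-1}$ where the paper uses the truncated spectral integral $\int_{\varepsilon}^{\infty}\lambda^{-1}\,dE_{\lambda}$ applied to $E_{\varepsilon}^{\perp}x$; both regularizers yield the same monotone convergence because $x\perp\ker T$ removes the atom at $0$.
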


\begin{proof}
Let $\{E_{\lam}\}_{\lam\in \R}$ be the spectral family of $T$, 
and we set 
\[E_{\lam}^{\perp}=I_{\cH}-E_{\lam}=E( (\lam,\infty) ).
\]
Suppose that $u=Tx$ where $x$ is in $\cL=(\ker T)^{\perp}$. 
Then, for arbitrary $\varepsilon>0$, 
put 
\[
x_{\varepsilon}=E_{\varepsilon}^{\perp}x,\quad 
y_{\varepsilon}=\left(\int_{\varepsilon}^{\infty}\frac{1}{\lam}\ dE_{\lam}\right)x_{\varepsilon}
\quad \mbox{and}\quad
\mathbf{z}_{\varepsilon}=\bT^{\sharp}y_{\varepsilon}.
\]
We note that $y_{\varepsilon}$ and $\mathbf{z}_{\varepsilon}$ belong 
to $\cL$ and $\cK_0$, respectively.  
Then we have that
\[
\bT \mathbf{z}_{\varepsilon}=\bT\bT^{\sharp}y_{\varepsilon}
=T^2y_{\varepsilon}=T^2\left(\int_{\varepsilon}^{\infty}\frac{1}{\lam}\ dE_{\lam}\right)x_{\varepsilon}
=Tx_{\varepsilon}.
\]
Hence we have that
\[
\|u-\bT \mathbf{z}_{\varepsilon}\|_{\cH}=\|Tx-Tx_{\varepsilon}\|_{\cH}
\leq  \|x-x_{\varepsilon}\|_{\cH}\to 0\quad (\varepsilon\to 0).
\]
Thus we have (i). 
Further, 
it follows from $Ty_{\varepsilon}=x_{\varepsilon}$ that 
\begin{align*}
\|z_1(\varepsilon)\|_{\cH}^2+\|z_2(\varepsilon)\|_{\cH}^2-\|z_3(\varepsilon)\|_{\cH}^2
&=\|T_1^{\ast}y_{\varepsilon}\|_{\cH}^2
+\|T_2^{\ast}y_{\varepsilon}\|_{\cH}^2-\|T_3^{\ast}y_{\varepsilon}\|_{\cH}^2\\
&=\|Ty_{\varepsilon}\|_{\cH}^2\\
&=\|x_{\varepsilon}\|_{\cH}^2\\
&\leq \|x\|_{\cH}^2.
\end{align*}
This concludes (ii). Finally, since 
\begin{align*}
\|\mathbf{z}_{\varepsilon}-\mathbf{z}_{\delta}\|_{\cK_0}^2
&=\|\bT^{\sharp}y_{\varepsilon}-\bT^{\sharp}y_{\delta}\|_{\cK_0}\\
&=\|T_1^{\ast}(y_{\varepsilon}-y_{\delta})\|_{\cH}^2
+\|T_2^{\ast}(y_{\varepsilon}-y_{\delta})\|_{\cH}^2
-\|T_3^{\ast}(y_{\varepsilon}-y_{\delta})\|_{\cH}^2\\
&=\|T (y_{\varepsilon}-y_{\delta})\|_{\cH}^2\\
&=\|x_{\varepsilon}-x_{\delta}\|_{\cH}^2\\
&\to 0\quad (\varepsilon,\delta \to 0),
\end{align*}
$\mathbf{z}_{\varepsilon}$ converges to some vector $\mathbf{z}$ in $\cK_0$, and 
\[
u
=\lim_{\varepsilon\to 0}\bT \mathbf{z}_{\varepsilon}
=\lim_{\varepsilon\to 0}\widetilde{\bT} \mathbf{z}_{\varepsilon}
=\widetilde{\bT}\mathbf{z}.
\]
Thus we have (iii). 
\end{proof}

\begin{cor}\label{cor:3-1}
Suppose that $\cH$ is of finite dimension. 
If $u$ belongs to $\cM(T)$, then there exists some 
$\mathbf{z}=(z_1,z_2,z_3)^t$ in $\cH\oplus \cH \oplus \cH$ such that 
\[
T_1z_1+T_2z_2-T_3z_3= u
\]
and
\[
\|z_1\|_{\cH}^2+\|z_2\|_{\cH}^2-\|z_3\|_{\cH}^2
= \|u\|_{\cM(T)}^2.
\]
\end{cor}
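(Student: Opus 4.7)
The plan is to apply the construction from the proof of Theorem \ref{thm:4-1} and observe that in finite dimension the approximation parameter $\varepsilon$ can be taken to be $0$. Since $T$ is a positive contraction on a finite-dimensional Hilbert space, its spectrum is a finite subset of $[0,1]$, so there exists $\varepsilon_0 > 0$ with $\operatorname{spec}(T)\cap (0,\varepsilon_0) = \emptyset$. For any $\varepsilon \in (0,\varepsilon_0)$, the spectral projection $E_\varepsilon^\perp$ coincides with $P_{(\ker T)^\perp}$, and the spectral integral $\int_\varepsilon^\infty \lambda^{-1}\, dE_\lambda$ is $\varepsilon$-independent; it realizes the inverse of $T|_{(\ker T)^\perp}$, extended by $0$ on $\ker T$. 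Denote this operator by $T^{\dagger}$.

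Given $u\in \cM(T)$, there exists a unique $x \in (\ker T)^\perp$ with $Tx = u$, which satisfies $\|x\|_\cH = \|u\|_{\cM(T)}$. I then propose to set $y = T^{\dagger}x$ and
\[
(z_1, z_2, z_3)^t =: \mathbf{z} = \bT^\sharp y = (T_1^\ast y,\, T_2^\ast y,\, T_3^\ast y)^t.
\]
Since $(\ker T)^\perp = \ran T$ in finite dimension (as $T$ is self-adjoint with closed range), we have $TT^{\dagger}x = x$, so
\[
\bT \mathbf{z} = \bT\bT^\sharp y = T^2 y = Tx = u,
\]
which gives the first identity. Applying (\ref{eq:3-0}) to $y$ then yields
\[
\|z_1\|_\cH^2 + \|z_2\|_\cH^2 - \|z_3\|_\cH^2 = \|Ty\|_\cH^2 = \|x\|_\cH^2 = \|u\|_{\cM(T)}^2,
\]
which is the second identity.

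There is essentially no obstacle here: the role of the $\varepsilon$-approximation in Theorem \ref{thm:4-1} is precisely to handle the possibility that $0$ lies in the continuous spectrum of $T$, and this issue disappears in finite dimension. Alternatively, one could invoke Theorem \ref{thm:4-1} directly and exploit the fact that $\cK_0$ is a finite-dimensional subspace of $\cH\oplus \cH\oplus \cH$: on such a space any two norms are equivalent, so the convergence $\mathbf{z}_\varepsilon \to \mathbf{z}$ in $\cK_0$ from part (iii) forces componentwise convergence in $\cH\oplus \cH\oplus \cH$, and the componentwise norms of $\mathbf{z}$ then satisfy the required equality by passing to the limit in part (ii).
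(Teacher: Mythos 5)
Your proposal is correct and is essentially the paper's own argument: the paper's proof consists of the single observation that in finite dimension $E_{\varepsilon}^{\perp}=P_{(\ker T)^{\perp}}$ for all sufficiently small $\varepsilon>0$, so that the construction in the proof of Theorem \ref{thm:4-1} stabilizes and the approximate identities become exact. You have merely written out explicitly (via the pseudoinverse $T^{\dagger}$ and the identity $Ty=x$) what that one-line remark leaves implicit.
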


\begin{proof}
If $\varepsilon>0$ is sufficiently small,
then $E_{\varepsilon}^{\perp}=P_{(\ker T)^{\perp}}$. 
\end{proof}

Theorem \ref{thm:4-1} can be generalized 
to any finite operator tuple $(T_1,\ldots,T_m,T_{m+1},\ldots, T_n)$ 
satisfying
\[
\sum_{j=1}^mT_jT_j^{\ast}-\sum_{k=m+1}^nT_kT_k^{\ast}\geq 0.
\] 

In particular, 
the proof of Theorem \ref{thm:4-1} 
can be applied to 
the de Branges-Rovnyak complement $\cH(T)=\cM(\sqrt{I-TT^{\ast}})$ of $\cM(T)$. 

\begin{thm}\rm
Let $\cH$ be a Hilbert space. 
For any triplet $(T_1,T_2,T_3)$ in $\fT(\cH)$, we set 
\[
T=(T_1T_1^{\ast}+T_2T_2^{\ast}-T_3T_3^{\ast})^{1/2}.
\]
Then, for any $v$ in $\cH(T)$ and $\varepsilon>0$, 
there exists some vector  
$\mathbf{w}_{\varepsilon}=(w_0(\varepsilon),w_1(\varepsilon),w_2(\varepsilon),w_3(\varepsilon))^t$ 
in $\cH\oplus \cH \oplus \cH\oplus \cH$ such that 
\begin{enumerate}
\item[\rm (i)] $w_0(\varepsilon)-T_1w_1(\varepsilon)-T_2w_2(\varepsilon)+T_3w_3(\varepsilon)\to v$ 
$(\varepsilon\to 0)$ in the strong topology of $\cH$,
\item[\rm (ii)]
$0\leq \|w_0(\varepsilon)\|_{\cH}^2-\|w_1(\varepsilon)\|_{\cH}^2
-\|w_2(\varepsilon)\|_{\cH}^2+\|w_3(\varepsilon)\|_{\cH}^2
\uparrow  \|v\|_{\cH(T)}^2$ $(\varepsilon\downarrow 0)$.
\end{enumerate}
\end{thm}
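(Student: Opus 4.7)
The plan is to reduce the statement to the generalization of Theorem~\ref{thm:4-1} announced in the preceding remark. Since $0 \leq TT^{\ast} \leq I$, the operator
\[
S := (I - TT^{\ast})^{1/2}
\]
is positive and contractive, and by definition $\cH(T) = \cM(S)$. The crucial algebraic observation is that
\[
S^2 = I - TT^{\ast} = I \cdot I^{\ast} + T_3 T_3^{\ast} - T_1 T_1^{\ast} - T_2 T_2^{\ast},
\]
which exhibits $S^2$ as the indefinite sum associated with the four-tuple $(I, T_3, T_1, T_2)$ with signature $(+,+,-,-)$. This is exactly the type of tuple to which the generalization of Theorem~\ref{thm:4-1} applies.

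Next, I would transcribe the construction of Section~3 almost verbatim. In the Kre\u{\i}n space on $\cH \oplus \cH \oplus \cH \oplus \cH$ with inner product signature $(+,+,-,-)$, define the bounded linear map
\[
\mathbf{S}\colon (w_0,w_3,w_1,w_2)^t \mapsto w_0 + T_3 w_3 - T_1 w_1 - T_2 w_2,
\]
whose $\sharp$-adjoint is $\mathbf{S}^{\sharp} x = (x, T_3^{\ast} x, T_1^{\ast} x, T_2^{\ast} x)^t$, so that $\mathbf{S}\mathbf{S}^{\sharp} = S^2$. The analog of identity~(\ref{eq:3-0}) is
\[
\|S y\|_{\cH}^2 = \|y\|_{\cH}^2 + \|T_3^{\ast} y\|_{\cH}^2 - \|T_1^{\ast} y\|_{\cH}^2 - \|T_2^{\ast} y\|_{\cH}^2,
\]
from which the polar decomposition of $\mathbf{S}^{\sharp}$ and its Hilbert-space completion (the analog of $\cK_0$) are built exactly as for $\bT^{\sharp}$.

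Given $v = S x \in \cM(S)$ with $x \in (\ker S)^{\perp}$, I would let $\{E_{\lam}\}$ be the spectral family of $S$ and set
\[
x_{\varepsilon} = E_{\varepsilon}^{\perp} x, \quad y_{\varepsilon} = \left( \int_{\varepsilon}^{\infty} \tfrac{1}{\lam} \, dE_{\lam} \right) x_{\varepsilon}, \quad \mathbf{w}_{\varepsilon} = \mathbf{S}^{\sharp} y_{\varepsilon}.
\]
The same calculations as in the proof of Theorem~\ref{thm:4-1} then yield $\mathbf{S} \mathbf{w}_{\varepsilon} = S^2 y_{\varepsilon} = S x_{\varepsilon} \to v$ in $\cH$, giving~(i); and the indefinite quadratic form at $\mathbf{w}_{\varepsilon}$ equals $\|S y_{\varepsilon}\|_{\cH}^2 = \|x_{\varepsilon}\|_{\cH}^2$, which increases monotonically to $\|x\|_{\cH}^2 = \|v\|_{\cM(S)}^2 = \|v\|_{\cH(T)}^2$, giving~(ii).

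The main (in fact, only) obstacle I anticipate is sign bookkeeping: systematically rewriting the displayed identities of Section~3 in the new signature $(+,+,-,-)$, and noting that the hypothesis $0 \leq I - TT^{\ast} \leq I$ (immediate from $0 \leq TT^{\ast} \leq I$) is all that the generalized framework requires. Once these routine checks are in place, the argument is a word-for-word adaptation of the proof of Theorem~\ref{thm:4-1}, with $(T_1,T_2,T_3)$ and $T$ replaced by $(I,T_3,T_1,T_2)$ and $S$.
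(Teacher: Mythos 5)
Your proposal is correct and is exactly the argument the paper intends: the paper offers no separate proof, only the remark that Theorem \ref{thm:4-1} generalizes to tuples with signature $\sum_{j\le m}T_jT_j^{\ast}-\sum_{k>m}T_kT_k^{\ast}\ge 0$ and that this applies to $\cH(T)=\cM(\sqrt{I-TT^{\ast}})$ via $I-TT^{\ast}=I+T_3T_3^{\ast}-T_1T_1^{\ast}-T_2T_2^{\ast}$. Your reduction to the four-tuple $(I,T_3,T_1,T_2)$ and the verbatim transcription of the spectral-family construction is precisely that route.
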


In the proof of Theorem \ref{thm:4-1}, 
we essentially showed that 
$\cM(T)$ is contractively embedded into $\cM(\widetilde{\bT})$. 
Moreover, applying the same method in the proof of Theorem \ref{thm:3-4} stated in the next section, 
we can conclude that the converse is also true, that is, 
$\cM(T)=\cM(\widetilde{\bT})$ as Hilbert spaces. 
However, $\cK_0$ and $\widetilde{\bT}$ seem to be rather elusive obejects. 
Thus, in the next section, we will investigate the local structure of range spaces of $T$ and $\bT$.

\section{Local structure of range spaces}

In this section, we need some facts from de Branges-Rovnyak space theory and Kre\u{\i}n space geometry. 
Let $\cH$ and $\cG$ be Hilbert spaces, 
and let $A$ be any bounded linear operator from $\cH$ to $\cG$.   
Then $\cM(A)$ denotes the de Branges-Rovnyak space induced by $A$, that is, 
$\cM(A)$ is the Hilbert space consisting of all vectors in $\ran A$ 
with the pull-back norm 
\[
\|Ax\|_{\cM(A)}
=\|P_{(\ker A)^{\perp}}x\|_{\cH}
=\min\{ \|y\|_{\cH}:Ay=Ax \},
\] 
where $P_{(\ker A)^{\perp}}$ denotes the orthogonal projection onto $(\ker A)^{\perp}$. 

The following theorem seems to be well known to specialists in Hilbert space operator theory. 
\begin{thm}\label{thm:3-3}
Let $A$ be a bounded linear operator from $\cH$ to $\cG$ and 
let $u$ be a vector in $\cG$. 
Then $u$ belongs to $\ran A$ if and only if 
\[
\gamma:=\sup_{A^{\ast}y\neq0}\frac{|\la y,u \ra_{\cG}|}{\|A^{\ast}y\|_{\cH}}
\]
is finite. Further, then $\|u\|_{\cM(A)}=\gamma$. 
\end{thm}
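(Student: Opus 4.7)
The plan is to prove the two implications separately; the norm equality $\|u\|_{\cM(A)}=\gamma$ will fall out of the two estimates obtained along the way.

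For the easy direction, suppose $u\in\ran A$ and choose the minimal-norm preimage $x\in(\ker A)^{\perp}$, so that $\|x\|_{\cH}=\|u\|_{\cM(A)}$. Then for every $y\in\cG$ with $A^{\ast}y\neq 0$,
\[
|\la y,u\ra_{\cG}|=|\la y,Ax\ra_{\cG}|=|\la A^{\ast}y,x\ra_{\cH}|\leq \|A^{\ast}y\|_{\cH}\|x\|_{\cH},
\]
so $\gamma\leq\|u\|_{\cM(A)}<\infty$.

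For the converse, I assume $\gamma<\infty$ and aim to produce a preimage of $u$ via a Riesz representation argument on $\overline{\ran}A^{\ast}$. A preliminary step is to check that $u\in(\ker A^{\ast})^{\perp}$: if some $y_0\in\ker A^{\ast}$ satisfied $\la y_0,u\ra_{\cG}\neq 0$, then fixing any $y_1$ with $A^{\ast}y_1\neq 0$ and considering $y_1+ty_0$ for $t\in\R$ would give $A^{\ast}(y_1+ty_0)=A^{\ast}y_1$ while $|\la y_1+ty_0,u\ra_{\cG}|$ is unbounded in $t$, contradicting finiteness of $\gamma$. Hence $\ker A^{\ast}\subseteq\{u\}^{\perp}$.

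With this in hand, the formula $\varphi(A^{\ast}y):=\la y,u\ra_{\cG}$ defines a linear functional on $\ran A^{\ast}$ that is independent of the representative $y$, and the definition of $\gamma$ gives $|\varphi(A^{\ast}y)|\leq \gamma\|A^{\ast}y\|_{\cH}$. Extending $\varphi$ by continuity to $\overline{\ran}A^{\ast}$ and by zero on its orthogonal complement $\ker A$, the Riesz representation theorem yields $x_0\in(\ker A)^{\perp}$ with $\|x_0\|_{\cH}\leq\gamma$ such that $\la A^{\ast}y,x_0\ra_{\cH}=\la y,u\ra_{\cG}$ for every $y\in\cG$. Equivalently $Ax_0=u$, so $u\in\ran A$ and $\|u\|_{\cM(A)}\leq\|x_0\|_{\cH}\leq\gamma$. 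Combined with the forward direction this forces $\|u\|_{\cM(A)}=\gamma$. The principal (and really only) obstacle is the well-definedness of $\varphi$, which is precisely where finiteness of $\gamma$---as opposed to of some unrestricted supremum---is used through the auxiliary inclusion $\ker A^{\ast}\subseteq\{u\}^{\perp}$; once this is established, the rest is routine Riesz representation.
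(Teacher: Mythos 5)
Your proof is correct. Note first that the paper itself gives no argument for this theorem: it records the first half as Shmuly'an's theorem with a citation to Fillmore--Williams and refers to Ando's lecture notes for the norm identity, so there is no in-paper proof to match yours against. What you supply is the standard self-contained duality argument: the inequality $\gamma\leq\|u\|_{\cM(A)}$ by Cauchy--Schwarz against the minimal-norm preimage, and the converse by building a bounded linear functional on $\overline{\ran}\,A^{\ast}=(\ker A)^{\perp}$ and invoking Riesz representation. You correctly isolate the one genuinely delicate point, namely that finiteness of the supremum taken only over $y$ with $A^{\ast}y\neq 0$ still forces $u\perp\ker A^{\ast}$ (equivalently $u\in\overline{\ran}\,A$), via the perturbation $y_1+ty_0$; without that step $\varphi$ would not be well defined. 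Two cosmetic remarks: your perturbation argument tacitly assumes $A^{\ast}\neq 0$, so the wholly degenerate case $A=0$ should be disposed of separately (the theorem is then trivial once one fixes a convention for the empty supremum); and depending on which slot of the inner product is conjugate-linear you may need $\overline{\la y,u\ra}$ in the definition of $\varphi$ to get literal linearity, which changes nothing of substance. Your route is essentially the proof one finds in the cited sources, so nothing is lost by the paper's decision to omit it, but your version has the merit of making explicit where the restricted supremum enters.
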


The first half of Theorem \ref{thm:3-3} is 
known as Shmuly'an's theorem 
(see Corollary 2 of Theorem 2.1 in Fillmore-Williams~\cite{FW}).  
For the second half (norm identity) and also the proof, 
we referred to Ando~\cite{Ando}.

\begin{defn}\label{defn:3-1}
Let $\cK$ be a Kre\u{\i}n space. 
A subspace $\fM$ of $\cK$ is said to be uniformly positive with lower bound $\delta>0$
if 
\[
\la \mathbf{x},\mathbf{x}\ra_{\cK} \geq \delta \la J\mathbf{x},\mathbf{x} \ra_{\cK}
\quad (\mathbf{x}\in \fM). 
\] 
In particular, for the Kre\u{\i}n space defined in Section 3, 
if $\fM$ is uniformly positive with lower bound $\delta>0$, then  
\[
\|x_1\|_{\cH}^2+\|x_2\|_{\cH}^2-\|x_3\|_{\cH}^2 \geq \delta (\|x_1\|_{\cH}^2+\|x_2\|_{\cH}^2+\|x_3\|_{\cH}^2) 
\quad ((x_1,x_2,x_3)^t\in \fM).
\] 
\end{defn}

\begin{thm}\label{thm:3-1}
Let $\cK$ be a Kre\u{\i}n space. 
Every uniformly positive subspace of $\cK$ with lower bound $\delta>0$
 is contained in a maximal uniformly positive subspace with lower bound $\delta>0$, 
and every maximal uniformly positive subspace is a Hilbert space with the inner product of $\cK$. 
\end{thm}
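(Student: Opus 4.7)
The two assertions are logically independent, and I would treat them in order, both via standard arguments in Kre\u{\i}n space geometry.

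For the first assertion I would apply Zorn's lemma to the family $\mathscr{F}$ of all uniformly positive subspaces of $\cK$ with lower bound $\delta$ containing the given subspace $\fM_{0}$, partially ordered by inclusion. The task is to furnish an upper bound for an arbitrary chain $\{\fM_{\alpha}\}_{\alpha}$ in $\mathscr{F}$. The obvious candidate is $\fM_{\infty}=\bigcup_{\alpha}\fM_{\alpha}$; since the chain is totally ordered by inclusion, this union is a linear subspace, and any $\mathbf{x}\in \fM_{\infty}$ lies in some single $\fM_{\alpha}$, so the inequality $\la \mathbf{x},\mathbf{x}\ra_{\cK}\geq \delta\la J\mathbf{x},\mathbf{x}\ra$ from Definition \ref{defn:3-1} is inherited with the \emph{same} constant $\delta$. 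Zorn's lemma then yields a maximal element containing $\fM_{0}$.

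For the second assertion, let $\fM$ be a maximal uniformly positive subspace with lower bound $\delta$. I would first show that $\fM$ is closed in the ambient Hilbert space topology: the quadratic forms $\mathbf{x}\mapsto \la \mathbf{x},\mathbf{x}\ra_{\cK}$ and $\mathbf{x}\mapsto \la J\mathbf{x},\mathbf{x}\ra$ are continuous, so the defining inequality persists on the closure $\overline{\fM}$, which is thus uniformly positive with the same lower bound $\delta$ and contains $\fM$; maximality forces $\overline{\fM}=\fM$. Next, the Kre\u{\i}n inner product restricted to $\fM$ is positive definite (by uniform positivity), and the two norms on $\fM$ are equivalent, via
\[
\delta\|\mathbf{x}\|^2\leq \la \mathbf{x},\mathbf{x}\ra_{\cK}\leq \|\mathbf{x}\|^2,
\]
the upper bound being a consequence of $\|J\|=1$. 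Closedness in the ambient norm together with this equivalence gives completeness with respect to the $\cK$-norm, which is precisely the Hilbert space property claimed.

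There is no substantial obstacle; the content is just the classical principle that every maximal uniformly positive subspace is intrinsically complete. The one subtle point worth emphasizing is that both passage to the union along a chain and passage to the closure preserve the \emph{exact} lower bound $\delta$ rather than degrading it; this follows from the pointwise and continuous nature of the defining inequality and is what permits the argument to work with a fixed $\delta$ throughout rather than having to shrink it to some $\delta'<\delta$.
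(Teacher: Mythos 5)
The paper itself does not prove Theorem \ref{thm:3-1}; it is quoted from Dritschel--Rovnyak \cite{DR}, where the argument runs through the angular operator representation relative to a fundamental decomposition $\cK=\cK_{+}\oplus\cK_{-}$ rather than through Zorn's lemma. Your proof of the second assertion is correct and is the standard one: maximality forces $\fM$ to be closed in the ambient norm, and the two-sided bound $\delta\|\mathbf{x}\|^{2}\leq\la\mathbf{x},\mathbf{x}\ra_{\cK}\leq\|\mathbf{x}\|^{2}$ on $\fM$ then transfers completeness to the $\cK$-inner product.

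The first assertion, however, has a genuine gap. Zorn's lemma applied to your family $\mathscr{F}$ yields a subspace $\fM$ that is maximal \emph{among uniformly positive subspaces with lower bound $\delta$}; that is weaker than being a maximal uniformly positive subspace (one not properly contained in \emph{any} uniformly positive subspace) which happens to have lower bound $\delta$. A priori your $\fM$ could still sit properly inside a uniformly positive subspace whose lower bound is only some $\delta'<\delta$, and nothing in the chain argument excludes this; since the second clause of the theorem, and its application after Lemma \ref{lem:3-1}, concern maximality in the absolute sense, the point must be addressed. The repair uses a fundamental decomposition: a Zorn-maximal $\fM$ is closed (by your own closure argument), hence $P_{+}\fM$ is closed because $P_{+}$ is bounded below on any uniformly positive subspace; if $P_{+}\fM\neq\cK_{+}$, then adjoining a nonzero $x_{+}\in\cK_{+}\ominus P_{+}\fM$ gives a strictly larger subspace that still has lower bound $\delta$ (the new vector is orthogonal to $\fM$ for both $\la\cdot,\cdot\ra_{\cK}$ and the ambient inner product, and $\la x_{+},x_{+}\ra_{\cK}=\|x_{+}\|^{2}\geq\delta\|x_{+}\|^{2}$ since necessarily $\delta\leq1$), contradicting maximality; finally, a closed uniformly positive subspace with $P_{+}\fM=\cK_{+}$ is maximal among all nonnegative subspaces, because any vector of a strictly larger one could be corrected by an element of $\fM$ to a nonzero vector of $\cK_{-}$. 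The reference's proof avoids all of this by writing the given subspace as the graph of a contraction $C$ from $P_{+}\fM_{0}$ into $\cK_{-}$ with $\|C\|^{2}\leq(1-\delta)/(1+\delta)$ and extending $C$ by zero to all of $\cK_{+}$; the graph of the extension is maximal uniformly positive with the same $\delta$, constructively and without appeal to choice.
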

For the details of Definition \ref{defn:3-1} and Theorem \ref{thm:3-1}, see Dritschel-Rovnyak~\cite{DR}. 

\begin{lem}\label{lem:3-1}
Let $(T_1,T_2,T_3)$ be any triplet in $\fT(\cH)$.  
Then, for any $\varepsilon>0$, 
\[
\fM_{\varepsilon}=\{(T_1^{\ast}x,T_2^{\ast}x,T_3^{\ast}x)^t: x \in E_{\varepsilon}^{\perp}\cH\}
\] 
is uniformly positive. 
\end{lem}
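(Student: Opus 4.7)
The plan is to exhibit an explicit $\delta>0$ depending on $\varepsilon$ and on $(T_1,T_2,T_3)$ such that the uniform-positivity inequality of Definition \ref{defn:3-1} holds on $\fM_\varepsilon$. For a generic element $\mathbf{x}=(T_1^{\ast}x,T_2^{\ast}x,T_3^{\ast}x)^t$ with $x\in E_{\varepsilon}^{\perp}\cH$, the identity (\ref{eq:3-0}) rewrites the indefinite form as
\[
\|T_1^{\ast}x\|_{\cH}^2+\|T_2^{\ast}x\|_{\cH}^2-\|T_3^{\ast}x\|_{\cH}^2=\|Tx\|_{\cH}^2,
\]
so the task reduces to bounding $\|Tx\|_{\cH}^2$ from below and the Hilbert-space norm $\|T_1^{\ast}x\|_{\cH}^2+\|T_2^{\ast}x\|_{\cH}^2+\|T_3^{\ast}x\|_{\cH}^2$ from above, both in terms of $\|x\|_{\cH}^2$.

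The lower bound is immediate from functional calculus. Since $E_\varepsilon^\perp=E((\varepsilon,\infty))$, for $x\in E_\varepsilon^\perp\cH$ one has
\[
\|Tx\|_{\cH}^2=\int_{\varepsilon}^{\infty}\lam^2\, d\la E_{\lam}x,x\ra_{\cH}\geq \varepsilon^2\|x\|_{\cH}^2.
\]
For the upper bound I would invoke the defining inequality (\ref{eq:1-1}): from $T_1T_1^{\ast}+T_2T_2^{\ast}=T^2+T_3T_3^{\ast}\leq I+T_3T_3^{\ast}$ it follows that $\|T_1^{\ast}x\|_{\cH}^2+\|T_2^{\ast}x\|_{\cH}^2\leq \|x\|_{\cH}^2+\|T_3^{\ast}x\|_{\cH}^2$, hence
\[
\|T_1^{\ast}x\|_{\cH}^2+\|T_2^{\ast}x\|_{\cH}^2+\|T_3^{\ast}x\|_{\cH}^2\leq \|x\|_{\cH}^2+2\|T_3^{\ast}x\|_{\cH}^2\leq (1+2\|T_3\|^2)\|x\|_{\cH}^2.
\]
Combining the two estimates yields the desired uniform positivity with $\delta=\varepsilon^2/(1+2\|T_3\|^2)>0$ (if $T_3=0$ one can take any crude operator-norm bound on the remaining $T_i$'s instead).

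There is no substantive obstacle here; the content of the lemma is essentially that the spectral cutoff $E_\varepsilon^\perp$ creates a uniform spectral gap for $T$ away from $0$, which via (\ref{eq:3-0}) transfers to uniform positivity of the image under $\bT^{\sharp}$. The only conceptual point worth flagging is that restriction to $E_\varepsilon^\perp\cH$ is essential: on the full space $(\ker T)^{\perp}$ one can only guarantee $\|Tx\|_{\cH}^2\geq 0$, with arbitrarily small positive values attainable along spectral sequences tending to $0$, so $\bT^{\sharp}\cL$ itself is merely nonnegative rather than uniformly positive.
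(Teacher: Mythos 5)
Your proof is correct and follows essentially the same route as the paper: rewrite the indefinite form as $\|Tx\|_{\cH}^2$ via (\ref{eq:3-0}), bound it below by $\varepsilon^2\|x\|_{\cH}^2$ using the spectral cutoff, and bound $\sum_j\|T_j^{\ast}x\|_{\cH}^2$ above by a constant times $\|x\|_{\cH}^2$. The only difference is cosmetic: the paper uses the crude constant $3\max_j\|T_j^{\ast}\|^2$ where you use $1+2\|T_3\|^2$ (which, incidentally, is never zero, so your parenthetical caveat about $T_3=0$ is unnecessary).
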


\begin{proof}
For any $\varepsilon >0$ and any vector $x$ in $E_{\varepsilon}^{\perp}\cH$, 
\begin{align*}
\|T_1^{\ast} x\|_{\cH}^2+\|T_2^{\ast} x\|_{\cH}^2-\|T_3^{\ast} x\|_{\cH}^2
&= \| Tx \|_{\cH}^2\\ 
&=\int_{\varepsilon}^{\infty}|\lam|^2\ d\|E_{\lam}x\|_{\cH}^2\\
&\geq \varepsilon^2 \|x\|_{\cH}^2\\
&\geq \frac{\varepsilon^2}{3\max_{1\leq j\leq 3} \|T_j^{\ast}\|^2}
(\|T_1^{\ast}x\|_{\cH}^2+\|T_2^{\ast}x\|_{\cH}^2+\|T_3^{\ast}x\|_{\cH}^2).
\end{align*}
This concludes the proof. 
\end{proof}

Let $\widetilde{\fM_{\varepsilon}}$ be 
a maximal uniformly positive subspace containing $\fM_\varepsilon$. 
Then, $\widetilde{\fM_{\varepsilon}}$ is a Hilbert space with the inner product of $\cK$
by Theorem \ref{thm:3-1},  
and hence so is $\overline{\fM_{\varepsilon}}$, the closure of $\fM_{\varepsilon}$ in $\widetilde{\fM_{\varepsilon}}$. 
We note that  $\overline{\fM_{\varepsilon}}$ is uniformly positive, that is, 
there exists some $\delta>0$ such that the following inequality holds:
\begin{equation}\label{eq:3-1}
\|x\|_{\cH}^2+\|y\|_{\cH}^2-\|z\|_{\cH}^2\geq \delta (\|x\|_{\cH}^2+\|y\|_{\cH}^2+\|z\|_{\cH}^2)\quad 
((x,y,z)^t\in \overline{\fM_{\varepsilon}}).
\end{equation}

\begin{lem}\label{thm:3-2}
Let $(T_1,T_2,T_3)$ be any triplet in $\fT(\cH)$.  
Then, for any $\varepsilon>0$, 
$\bT|_{\overline{\fM_{\varepsilon}}}: \overline{\fM_{\varepsilon}}\to \cH$ is bounded as a Hilbert space operator. 
\end{lem}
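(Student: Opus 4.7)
The plan is to establish the operator bound first on the dense subspace $\fM_\varepsilon$ of $\overline{\fM_\varepsilon}$, where $\bT$ admits a transparent closed form, and then extend by continuity using the uniform positivity inequality (\ref{eq:3-1}).

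First I would evaluate $\bT$ on a generic element $\mathbf{v}=\bT^{\sharp}x=(T_1^{\ast}x,T_2^{\ast}x,T_3^{\ast}x)^t$ of $\fM_\varepsilon$, with $x\in E_\varepsilon^{\perp}\cH$. On the one hand, $\bT\bT^{\sharp}x=T^2 x$; on the other hand, the Kre\u{\i}n norm squared of $\mathbf{v}$ equals $\la \bT^{\sharp}x,\bT^{\sharp}x\ra_{\cK}=\|Tx\|_{\cH}^2$ by the identity (\ref{eq:3-0}). Since $T$ is a positive contraction, $\|T^2 x\|_{\cH}\le \|T\|\,\|Tx\|_{\cH}\le\|Tx\|_{\cH}$, which yields the inequality $\|\bT\mathbf{v}\|_{\cH}\le \|\mathbf{v}\|_{\overline{\fM_\varepsilon}}$ on $\fM_\varepsilon$, with constant at most one. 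In particular no analytical depth is needed at this stage — everything follows from the polar-decomposition style factorization $\bT\bT^{\sharp}=T^2$.

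Next I would extend this estimate to $\overline{\fM_\varepsilon}$ by density. For this I need to know that, when a sequence $\mathbf{v}_n\in\fM_\varepsilon$ converges to some $\mathbf{v}\in\overline{\fM_\varepsilon}$ in the Kre\u{\i}n norm, the images $\bT\mathbf{v}_n$ actually converge to $\bT\mathbf{v}$ in $\cH$. This is a consequence of combining (\ref{eq:3-1}), which forces the Kre\u{\i}n norm on $\overline{\fM_\varepsilon}$ to dominate a positive multiple of the ambient $\cH\oplus\cH\oplus\cH$ norm, with the obvious continuity of $\bT\colon\cH\oplus\cH\oplus\cH\to\cH$ in the Hilbert norms. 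The bound of the previous paragraph then passes to the limit.

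The main (rather mild) obstacle is bookkeeping rather than analysis: I must keep two notions of closure straight, and confirm that elements of $\overline{\fM_\varepsilon}$ really are vectors in $\cH\oplus\cH\oplus\cH$ on which $\bT$ is defined. This is ensured because $\widetilde{\fM_\varepsilon}$ sits inside $\cK=\cH\oplus\cH\oplus\cH$ as a subspace by Theorem \ref{thm:3-1}, and uniform positivity forces any Cauchy sequence in the Kre\u{\i}n norm to be Cauchy in the ambient Hilbert norm, so that the abstract limit in $\widetilde{\fM_\varepsilon}$ coincides with the concrete limit in $\cH\oplus\cH\oplus\cH$.
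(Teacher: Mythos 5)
Your proof is correct, but it takes a genuinely different route from the paper's. The paper argues indirectly: since $\bT|_{\overline{\fM_{\varepsilon}}}$ is everywhere defined on the Hilbert space $\overline{\fM_{\varepsilon}}$, boundedness follows from the closed graph theorem once the graph is shown to be closed, and that closedness check is precisely the observation you make in your second step --- uniform positivity (\ref{eq:3-1}) forces Kre\u{\i}n-norm convergence in $\overline{\fM_{\varepsilon}}$ to imply componentwise convergence in $\cH\oplus\cH\oplus\cH$, whence $\bT\mathbf{v}_n\to\bT\mathbf{v}$. You instead prove the quantitative estimate $\|\bT\mathbf{v}\|_{\cH}\leq\|\mathbf{v}\|_{\overline{\fM_{\varepsilon}}}$ directly on the dense subspace $\fM_{\varepsilon}$, using $\bT\bT^{\sharp}=T^2$, the identity (\ref{eq:3-0}), and the contractivity of $T$, and then pass to the closure; uniform positivity enters only to identify the continuous extension with the honest restriction of $\bT$ (and to guarantee that elements of $\overline{\fM_{\varepsilon}}$ are genuine vectors of $\cH\oplus\cH\oplus\cH$, which you rightly flag). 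What your approach buys is an explicit bound: $\bT|_{\overline{\fM_{\varepsilon}}}$ is in fact a contraction, uniformly in $\varepsilon$ and with no dependence on the constant $\delta$ of (\ref{eq:3-1}) --- information the closed-graph argument cannot deliver and which is in the spirit of Lemma \ref{lem:3-2}. What the paper's approach buys is brevity and generality: it uses nothing about the algebraic form of the elements of $\fM_{\varepsilon}$ and would apply verbatim to any uniformly positive subspace on which $\bT$ is everywhere defined. Both arguments ultimately rest on the same fact, namely that (\ref{eq:3-1}) makes the inclusion of $\overline{\fM_{\varepsilon}}$ into $\cH\oplus\cH\oplus\cH$ bounded.
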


\begin{proof}\rm 
Since $\bT|_{\overline{\fM_{\varepsilon}}}$ is defined everywhere in ${\overline{\fM_{\varepsilon}}}$, 
by the closed graph theorem, 
it suffices to show that $\bT|_{\overline{\fM_{\varepsilon}}}$ is closed. 
Suppose that 
\[
\|x_n-x\|_{\cH}^2+\|y_n-y\|_{\cH}^2-\|z_n-z\|_{\cH}^2\to 0 \quad(n\to \infty)
\]
and
\[
\|T_1x_n+T_2y_n-T_3z_n-u\|_{\cH}^2\to 0 \quad(n\to \infty).
\]
Then it follows from (\ref{eq:3-1}) that 
$x_n$, $y_n$ and $z_n$ converge to $x$, $y$ and $z$ in $\cH$, respectively. 
Hence we have that 
$u=T_1x+T_2y-T_3z$. This concludes the proof. 
\end{proof}

We will deal with $\bT|_{\overline{\fM_{\varepsilon}}}$ as a Hilbert space operator from 
$\overline{\fM_{\varepsilon}}$ to $\cH$. 
\begin{lem}\label{lem:3-2}
Let $(T_1,T_2,T_3)$ be any triplet in $\fT(\cH)$.  
Then, for any $\varepsilon>0$, 
$\cM(T|_{E_{\varepsilon}^{\perp}\cH})$ 
is contractively embedded into $\cM(\bT|_{\overline{\fM_{\varepsilon}}})$.
\end{lem}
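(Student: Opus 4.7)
The plan is to produce, for each $u \in \cM(T|_{E_{\varepsilon}^{\perp}\cH})$, an explicit preimage $\mathbf{z}\in\overline{\fM_{\varepsilon}}$ under $\bT|_{\overline{\fM_{\varepsilon}}}$ whose Hilbert space norm equals $\|u\|_{\cM(T|_{E_{\varepsilon}^{\perp}\cH})}$; the minimal-norm characterization of the target de Branges--Rovnyak norm will then immediately yield the contractive embedding.

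Before constructing $\mathbf{z}$, I would record two preparatory identifications. Since $T\geq \varepsilon I$ on $E_{\varepsilon}^{\perp}\cH$, the restriction $T|_{E_{\varepsilon}^{\perp}\cH}:E_{\varepsilon}^{\perp}\cH\to\cH$ is injective, so $\cM(T|_{E_{\varepsilon}^{\perp}\cH})=T(E_{\varepsilon}^{\perp}\cH)$ and $\|T\xi\|_{\cM(T|_{E_{\varepsilon}^{\perp}\cH})}=\|\xi\|_{\cH}$ for every $\xi\in E_{\varepsilon}^{\perp}\cH$. On the other side, because $\overline{\fM_{\varepsilon}}$ is uniformly positive in $\cK$ by Lemma \ref{lem:3-1} and Theorem \ref{thm:3-1}, its Hilbert space norm is exactly the restriction of the Kre\u{\i}n inner product, so identity (\ref{eq:3-0}) gives
\[
\|\bT^{\sharp}\xi\|_{\overline{\fM_{\varepsilon}}}^{2}
=\la \bT^{\sharp}\xi,\bT^{\sharp}\xi\ra_{\cK}
=\|T\xi\|_{\cH}^{2}\qquad (\xi\in E_{\varepsilon}^{\perp}\cH).
\]

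Given $u\in \cM(T|_{E_{\varepsilon}^{\perp}\cH})$, let $x$ be the unique vector in $E_{\varepsilon}^{\perp}\cH$ with $Tx=u$, and, mimicking the construction in the proof of Theorem \ref{thm:4-1}, set
\[
y=\left(\int_{\varepsilon}^{\infty}\frac{1}{\lam}\, dE_{\lam}\right)x\in E_{\varepsilon}^{\perp}\cH,\qquad \mathbf{z}=\bT^{\sharp}y\in \fM_{\varepsilon}\subseteq \overline{\fM_{\varepsilon}}.
\]
Since $Ty=x$ and $\bT\bT^{\sharp}=T^{2}$, we get $\bT\mathbf{z}=T^{2}y=Tx=u$, so $u$ lies in the range of $\bT|_{\overline{\fM_{\varepsilon}}}$. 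Using the preparatory norm identity, $\|\mathbf{z}\|_{\overline{\fM_{\varepsilon}}}^{2}=\|Ty\|_{\cH}^{2}=\|x\|_{\cH}^{2}=\|u\|_{\cM(T|_{E_{\varepsilon}^{\perp}\cH})}^{2}$, and the minimal-norm characterization of the $\cM(\bT|_{\overline{\fM_{\varepsilon}}})$ norm yields
\[
\|u\|_{\cM(\bT|_{\overline{\fM_{\varepsilon}}})}\leq \|\mathbf{z}\|_{\overline{\fM_{\varepsilon}}}=\|u\|_{\cM(T|_{E_{\varepsilon}^{\perp}\cH})},
\]
which is the desired contractive embedding.

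I do not anticipate a serious obstacle: the resolvent-type functional calculus producing $y$ is identical to the one in the proof of Theorem \ref{thm:4-1}, and all the Kre\u{\i}n space bookkeeping has already been carried out in Lemma \ref{lem:3-1}, Theorem \ref{thm:3-1}, and Lemma \ref{thm:3-2}. The only point requiring a moment of reflection is that the Hilbert norm on $\overline{\fM_{\varepsilon}}$ coincides with the Kre\u{\i}n norm restricted to it; this is precisely the reason for working with the closure of $\fM_{\varepsilon}$ inside a maximal uniformly positive subspace rather than with $\bT^{\sharp}\cH$ directly.
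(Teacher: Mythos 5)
Your proposal is correct and follows essentially the same route as the paper: both take $u=Tx$ with $x\in E_{\varepsilon}^{\perp}\cH$, form $y=\bigl(\int_{\varepsilon}^{\infty}\lam^{-1}\,dE_{\lam}\bigr)x$ and $\mathbf{z}=\bT^{\sharp}y\in\fM_{\varepsilon}$, verify $\bT\mathbf{z}=u$ and $\la\mathbf{z},\mathbf{z}\ra_{\cK}=\|x\|_{\cH}^2=\|u\|_{\cM(T|_{E_{\varepsilon}^{\perp}\cH})}^2$, and conclude by the minimal-norm characterization. Your extra remarks on the injectivity of $T|_{E_{\varepsilon}^{\perp}\cH}$ and on the Hilbert norm of $\overline{\fM_{\varepsilon}}$ being the restricted Kre\u{\i}n inner product are correct and only make explicit what the paper leaves implicit.
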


\begin{proof}
Suppose that $u=Tx_{\varepsilon}$ where $x_{\varepsilon}$ is in ${E_{\varepsilon}^{\perp}\cH}$. 
In the proof of Theorem \ref{thm:4-1}, 
we showed that 
\[
\mathbf{z}(\varepsilon)
=\bT^{\sharp}y_{\varepsilon}
=\begin{pmatrix}
T_1^{\ast}y_{\varepsilon}\\
T_2^{\ast}y_{\varepsilon}\\
T_3^{\ast}y_{\varepsilon}
\end{pmatrix}
\] 
satisfies 
$\bT \mathbf{z}(\varepsilon)=Tx_{\varepsilon}=u$
and
\begin{equation*}\label{eq:3-3}
0\leq \la\mathbf{z}(\varepsilon),\mathbf{z}(\varepsilon)\ra_{\cK}=
\|z_1(\varepsilon)\|_{\cH}^2+\|z_2(\varepsilon)\|_{\cH}^2-\|z_3(\varepsilon)\|_{\cH}^2
=\|x_{\varepsilon}\|_{\cH}^2 = \|u\|_{\cM(T|_{E_{\varepsilon}^{\perp}\cH})}^2.
\end{equation*}
Moreover, this $\mathbf{z}(\varepsilon)$ belongs to $\fM_{\varepsilon}$. 
These conclude the proof.
\end{proof}

The next theorem is a generalization of Corollary \ref{cor:3-1}.  
\begin{thm}\label{thm:3-4}\rm 
Let $(T_1,T_2,T_3)$ be any triplet in $\fT(\cH)$.  
Then, for any $\varepsilon>0$, 
\[
\cM(T|_{E_{\varepsilon}^{\perp}\cH})= \cM(\bT|_{\overline{\fM_{\varepsilon}}})
\]
as Hilbert spaces. 
\end{thm}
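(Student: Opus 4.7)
The plan is to combine Lemma~\ref{lem:3-2} with the Shmulyan--Ando characterization in Theorem~\ref{thm:3-3} to identify both $\cM$-spaces as the same supremum over $y \in \cH$. Since Theorem~\ref{thm:3-3} expresses range membership and $\cM$-norm purely in terms of the Hilbert space adjoint of the operator, the entire result will follow once I show
\[
\|(T|_{E_{\varepsilon}^{\perp}\cH})^{\ast}y\|_{E_{\varepsilon}^{\perp}\cH}
= \|(\bT|_{\overline{\fM_{\varepsilon}}})^{\ast}y\|_{\overline{\fM_{\varepsilon}}}
\qquad (y \in \cH),
\]
and that the two adjoints vanish on the same set of $y$'s.

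The adjoint $(T|_{E_{\varepsilon}^{\perp}\cH})^{\ast}$ is straightforward: because $T$ is self-adjoint and commutes with $E_{\varepsilon}^{\perp}$, one reads off $(T|_{E_{\varepsilon}^{\perp}\cH})^{\ast}y = TE_{\varepsilon}^{\perp}y$. For the adjoint of $\bT|_{\overline{\fM_{\varepsilon}}}$ my candidate is $y \mapsto \bT^{\sharp}E_{\varepsilon}^{\perp}y$, which already lies in $\fM_{\varepsilon}$. I would verify the defining relation by testing against a generic element $\mathbf{x} = \bT^{\sharp}w$, $w \in E_{\varepsilon}^{\perp}\cH$, of the dense subspace $\fM_{\varepsilon}$: both $\la \bT\mathbf{x},y\ra_{\cH}$ and $\la \mathbf{x},\bT^{\sharp}E_{\varepsilon}^{\perp}y\ra_{\cK}$ collapse to $\la T^{2}w, y\ra_{\cH}$ once one uses $T^{2}w \in E_{\varepsilon}^{\perp}\cH$. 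The relation then passes to all of $\overline{\fM_{\varepsilon}}$ by density of $\fM_{\varepsilon}$ together with the boundedness of $\bT|_{\overline{\fM_{\varepsilon}}}$ supplied by Lemma~\ref{thm:3-2}. A one-line computation, $\|\bT^{\sharp}E_{\varepsilon}^{\perp}y\|_{\cK}^{2} = \la T^{2}E_{\varepsilon}^{\perp}y, E_{\varepsilon}^{\perp}y\ra_{\cH} = \|TE_{\varepsilon}^{\perp}y\|_{\cH}^{2}$, yields the desired norm identity; the two denominators visibly vanish on the same set (namely $E_{\varepsilon}\cH$), so Theorem~\ref{thm:3-3} delivers the set equality and the norm equality simultaneously. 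The contractive embedding already recorded in Lemma~\ref{lem:3-2} is then subsumed as the trivial direction of this identification.

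The main obstacle is the identification of $(\bT|_{\overline{\fM_{\varepsilon}}})^{\ast}$. A priori the Hilbert space adjoint takes values in the abstract completion $\overline{\fM_{\varepsilon}}$, whose inner product is the restriction of the indefinite form of $\cK$, so one must confirm that the natural candidate $\bT^{\sharp}E_{\varepsilon}^{\perp}y \in \fM_{\varepsilon}$ really implements the Hilbert space adjoint on the full closure rather than only on the dense core. The uniform positivity estimate~(\ref{eq:3-1}) combined with Lemma~\ref{thm:3-2} is what makes the density argument transfer cleanly and guarantees that the Krein and Hilbert norms on $\overline{\fM_{\varepsilon}}$ are equivalent enough for continuity to take over. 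Once this adjoint identification is secured, invoking Theorem~\ref{thm:3-3} on both sides finishes the proof with no further work.
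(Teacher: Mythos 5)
Your argument is correct, and it takes a genuinely different route from the paper's. The paper proves the two inclusions separately: the embedding $\cM(T|_{E_{\varepsilon}^{\perp}\cH})\subset\cM(\bT|_{\overline{\fM_{\varepsilon}}})$ comes from the explicit spectral construction in the proof of Theorem~\ref{thm:4-1} (recorded as Lemma~\ref{lem:3-2}), while the reverse embedding is obtained by writing $\la x,\bT\mathbf{x}\ra_{\cH}=\la\bT^{\sharp}x,\mathbf{x}\ra_{\cK}$, applying Cauchy--Schwarz inside the Hilbert space $\overline{\fM_{\varepsilon}}$, and invoking Theorem~\ref{thm:3-3} to get $\|\bT\mathbf{x}\|_{\cM(T|_{E_{\varepsilon}^{\perp}\cH})}^{2}\leq\la\mathbf{x},\mathbf{x}\ra_{\cK}$. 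You instead identify both Hilbert space adjoints exactly, $(T|_{E_{\varepsilon}^{\perp}\cH})^{\ast}y=TE_{\varepsilon}^{\perp}y$ and $(\bT|_{\overline{\fM_{\varepsilon}}})^{\ast}y=\bT^{\sharp}E_{\varepsilon}^{\perp}y$, check via the identity (\ref{eq:3-0}) that $\|\bT^{\sharp}E_{\varepsilon}^{\perp}y\|_{\cK}=\|TE_{\varepsilon}^{\perp}y\|_{\cH}$ with the two adjoints vanishing precisely for $y\in E_{\varepsilon}\cH$, and then read off equality of ranges and of norms from a single symmetric application of Theorem~\ref{thm:3-3}. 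This dispenses with Lemma~\ref{lem:3-2} and the spectral-integral construction altogether, and upgrades the paper's Cauchy--Schwarz inequality to an exact identity of the two suprema; what it costs is the adjoint identification itself, and you handle the genuine subtleties there correctly: the verification on the dense core $\fM_{\varepsilon}=\bT^{\sharp}E_{\varepsilon}^{\perp}\cH$ rests on $T^{2}E_{\varepsilon}^{\perp}\cH\subset E_{\varepsilon}^{\perp}\cH$ (spectral projections commute with $T$), the passage to $\overline{\fM_{\varepsilon}}$ uses the boundedness of $\bT|_{\overline{\fM_{\varepsilon}}}$ from Lemma~\ref{thm:3-2} on one side and Cauchy--Schwarz in the Hilbert space $\overline{\fM_{\varepsilon}}$ on the other, and the uniform positivity (\ref{eq:3-1}) guarantees that $\overline{\fM_{\varepsilon}}$ consists of honest vectors of $\cK$ on which the $\cK$-form is a genuine norm, so that a vector of zero $\cK$-length is the zero vector. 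Both proofs ultimately hinge on the same two ingredients, the identity (\ref{eq:3-0}) and the Shmulyan--Ando theorem, but yours packages them more symmetrically.
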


\begin{proof}
By Lemma \ref{lem:3-2}, 
it suffices to show that 
$\cM(\bT|_{\overline{\fM_{\varepsilon}}})$
is contractively embedded into 
$\cM(T|_{E_{\varepsilon}^{\perp}\cH})$. 
For any $\bT \mathbf{x}$ where $\mathbf{x}=(x_1,x_2,x_3)^t$ 
in $\overline{\fM_{\varepsilon}}$ and any $x$ in $E_{\varepsilon}^{\perp}\cH$, we have that
\begin{align*}
|\la x,\bT \mathbf{x} \ra_{\cH}|^2
&=|\la \bT^{\sharp}x, \mathbf{x} \ra_{\cK}|^2\\
&\leq 
(\|T_1^{\ast} x\|_{\cH}^2+\|T_2^{\ast} x\|_{\cH}^2-\|T_3^{\ast} x\|_{\cH}^2)
(\|x_1\|_{\cH}^2+\|x_2\|_{\cH}^2-\|x_3\|_{\cH}^2)
\end{align*}
because $T^{\sharp}\mathbf{x}=(T_1^{\ast} x,T_2^{\ast} x,T_3^{\ast} x)^t$ 
and $(x_1,x_2,x_3)^t$ belong to $\overline{\fM_{\varepsilon}}$. 
Hence
\[
\sup_{x\in E_{\varepsilon}^{\perp}\cH\setminus\{0\}} \frac{|\la x,\bT \mathbf{x} \ra_{\cH}|^2}
{\|T_1^{\ast} x\|_{\cH}^2+\|T_2^{\ast} x\|_{\cH}^2-\|T_3^{\ast} x\|_{\cH}^2}
=\sup_{x\in E_{\varepsilon}^{\perp}\cH\setminus\{0\}} \frac{|\la x,\bT \mathbf{x} \ra_{\cH}|^2}{\|T x\|_{\cH}^2}
\]
is finite. 
By Theorem \ref{thm:3-3}, $\bT\mathbf{x}$ belongs to $\ran T|_{E_{\varepsilon}^{\perp}\cH}$ and 
\begin{equation*}
\|\bT \mathbf{x}\|_{\cM(T|_{E_{\varepsilon}^{\perp}\cH)}}^2
\leq \|x_1\|_{\cH}^2+\|x_2\|_{\cH}^2-\|x_3\|_{\cH}^2.
\end{equation*}
Thus we have that 
$\|\bT \mathbf{x}\|_{\cM(T|_{E_{\varepsilon}^{\perp}\cH)}}^2
\leq \|\bT \mathbf{x}\|_{\cM ( \bT|_{\overline{\fM_{\varepsilon}}} ) }^2. 
$
This concludes the proof. 
\end{proof}


\begin{thebibliography}{99}

\bibitem{AM}J. Agler and J. E. McCarthy, 
\textit{Pick interpolation and Hilbert function spaces}. 
Graduate Studies in Mathematics, 44. American Mathematical Society, Providence, RI, 2002.

\bibitem{Ando}T. Ando, 
\textit{de Branges spaces and analytic operator functions}. 
Lecture notes, Sapporo, Japan, 1990. 

\bibitem{DR}M. A. Dritschel and J. Rovnyak
\textit{Operators on indefinite inner product spaces}. 
Lectures on operator theory and its applications (Waterloo, ON, 1994), pp. 141--232, 
Fields Inst. Monogr., 3, Amer. Math. Soc., Providence, RI, 1996.

\bibitem{FW}P. A. Fillmore and J. P. Williams, 
\textit{On operator ranges}. 
Advances in Math.\ 7 (1971), pp. 254--281. 

\bibitem{WSY}Y. Wu, M. Seto and R. Yang, 
\textit{Kre\u{\i}n space representation and Lorentz groups of analytic Hilbert modules}.
Sci. China Math. 61 (2018), no. 4, pp. 745--768. 

\end{thebibliography}
\end{document}